\newcommand{\CC}{\mathbb{C}}
\newcommand{\NN}{\mathbb{N}}
\newcommand{\PP}{\mathbb{P}}
\newcommand{\Hc}{\mathcal{H}}
\newcommand{\set}[1]{\left\{ #1 \right\}}
\newcommand{\setb}[1]{\left( #1 \right)}
\newcommand{\abs}[1]{\left| #1 \right|}
\newcommand{\gfr}{\mathfrak{g}}
\newcommand{\bino}[2]{\begin{pmatrix} #1 \\ #2 \end{pmatrix}}
\newcommand{\m}{\textup{m}}
\newtheorem{mymasterthm}{notForUse}
\theoremstyle{definition}
\theoremstyle{plain}
\newtheorem{mylemma}[mymasterthm]{Lemma}
\newtheorem{mythm}[mymasterthm]{Theorem}
\newtheorem{myprop}[mymasterthm]{Proposition}
\title{Perfect powers in polynomial power sums}
\subjclass[2000]{11B37, 12Y05, 11R58}
\keywords{Polynomial decomposition, linear recurrence sequences, perfect powers}
\author[C. Fuchs]{Clemens Fuchs}
\author[S. Heintze]{Sebastian Heintze}
\address{University of Salzburg\newline
	\indent Department of Mathematics\newline
	\indent Hellbrunnerstr. 34 \newline
	\indent A-5020 Salzburg, Austria}
\email{clemens.fuchs@sbg.ac.at, sebastian.heintze@sbg.ac.at}
\begin{document}
	
\maketitle

	\begin{abstract}
		We prove that a non-degenerate simple linear recurrence sequence $ (G_n(x))_{n=0}^{\infty} $ of polynomials satisfying some further conditions cannot contain arbitrary large powers of polynomials if the order of the sequence is at least two. In other words we will show that for $ m $ large enough there is no polynomial $ h(x) $ of degree $ \geq 2 $ such that $ (h(x))^m $ is an element of $ (G_n(x))_{n=0}^{\infty} $. The bound for $ m $ depends here only on the sequence $ (G_n(x))_{n=0}^{\infty} $.
		In the binary case we prove even more. We show that then there is a bound $ C $ on the index $ n $ of the sequence $ (G_n(x))_{n=0}^{\infty} $ such that only elements with index $ n \leq C $ can be a proper power.
	\end{abstract}
	
	\section{Introduction}
	
	An interesting question that was studied in several recent papers (e.g. cf. \cite{B,FK,FKK,FMZ,R,S,Z1,Z3,Z4} and also \cite{FP1,FP2,PT,FZ,FMZ}) is what one can say about the decomposition of complex polynomials (i.e. elements of the ring $\CC[x]$ of complex polynomials) regarding the composition operation.

	The invertible elements w.r.t. decomposition are the linear polynomials. We call $f(x)=g\circ h$ a non-trivial decomposition if neither $g$ nor $h$ is linear.
	We call $f(x)=g\circ h$ an $m$-decomposition if $\deg g=m$ and we say that $f$ is $m$-decomposable if an $m$-decomposition exists.
	We call $f$ indecomposable if $f$ admits only trivial decompositions.
	A pair $(g,h)$ is called equivalent to $(g',h')$ if there are $a,b\in\mathbb{C},a\neq 0$ such that $g(x)=g'(ax+b),$ $h(x)=(h'(x)-b)/a$.
	A pair $(g,h)$ is called cyclic if it is equivalent to $(g',x^m)$ and dihedral if it is equivalent to $(g'',T_m(x))$ where $(T_n)_{n=0}^\infty$ denotes the sequence of Chebyshev polynomials (defined by $T_n(x+1/x)=x^n+1/x^n$) and $g',g''\in\mathbb{C}[x]$.
	We call $f$ cyclic if it is equivalent to a polynomial $g$ with $g(x)=x^n$ for some $n>1$, and dihedral if it is equivalent to $g=T_n$ for some $n>2$; here equivalent means that there are linear polynomials $l_1,l_2$ such that $f=l_1\circ g\circ l_2$.
	Mainly, one is interested in non-trivial decompositions (with two factors, an {inner} and an {outer} factor) of polynomials with coefficients in $\mathbb{C}$.
	It is natural to restrict to a subset {$\mathcal{I}$} of $\mathbb{C}[x]$ which is described by a finite amount of data and then to ask whether or not all decompositions in this subset can be described {in finite terms} depending on the data describing the subset.
	
	In \cite{Z3} and \cite{Z1} Zannier studied such decompositions with special focus on the number $ l $ of terms of the polynomial $ f $.
	Let us consider for a moment lacunary polynomials, i.e. we set $\mathcal{I}=\{f\in\mathbb{C}[x]; f$ has at most $\ell$ non-constant terms$\}$.
	Motivated by previous work of Erd\H{o}s \cite{E} and Schinzel \cite{Sch}, Zannier \cite{Z1} finally proved that there are integers $p,J$ depending on $\ell$ and for every $1\leq j\leq J$ an algebraic variety $\mathcal{V}_j$ defined over $\mathbb{Q}$ and a lattice $\Lambda_j$ for which equations can be written down explicitly and (Laurent-)polynomials $f_j,h_j\in\mathbb{Q}[\mathcal{V}_j][z_1^{\pm 1},\ldots,z_p^{\pm 1}],g_j\in\mathbb{Q}[\mathcal{V}_j][z]$ with coefficients in the coordinate ring of the variety such that the following holds:
	\begin{compactitem}
		\item[a)] $g_j\circ h_j=f_j$ is a (Laurent-)polynomial with $\ell$ non-constant terms with coefficients in the coordinate ring;
		\item[b)] for every point $P\in\mathcal{V}_j(\mathbb{C})$ and $(u_1,\ldots,u_p)\in\Lambda_j$ one gets a decomposition $f_j(P,x^{u_1},$ $\ldots,x^{u_p})=g_j(P,h_j(P,x^{u_1},\ldots,x^{u_p}))$;
		\item[c)] conversely, for every polynomial $f\in\mathbb{C}[x]$ with $\ell$ non-constant terms and every non-trivial decomposition $f(x)=g\circ h$ with $h(x)$ not of the shape $ax^m+b,m\in\mathbb{N},a,b\in\mathbb{C}$ there is a $j$, a point $P\in\mathcal{V}_j(\mathbb{C})$ and $(u_1,\ldots,u_p)\in\Lambda_j$ such that $f(x)=$ $f_j(P,x^{u_1},\ldots,x^{u_p}),g(x)=g_j(P,x),h(x)=h_j(P,x^{u_1},\ldots,x^{u_p})$.
	\end{compactitem}
	This result is based on an intermediate result \cite{Z3} that the outer decomposition factor has degree bounded explicitly in terms of $l$ unless the inner decomposition factor is cyclic.

	Another instance of this approach is given by {$\mathcal{I}=\{G_n(x); n\in\mathbb{N}\}$}, where $G_n(x)$ are elements of a linear recurrence sequence $(G_n)_{n=0}^\infty$ of polynomials in $\mathbb{C}[x]$.
	To fix terms we shall assume that the recurrence is given by $G_{n+d}(x)=A_{d-1}(x)G_{n+d-1}(x)+\cdots+A_0(x)G_n(x),$ with $A_0,\ldots,A_{d-1}\in\mathbb{C}\left[x\right]$ and initial terms $G_0,\ldots,G_{d-1}\in\mathbb{C}[x]$. Denote by $\alpha_1,\ldots,\alpha_t$ the distinct characteristic roots of the sequence, that is the characteristic polynomial $\mathcal{G}\in\mathbb{C}(x)[T]$ splits as
	$
	\mathcal{G}(T)=T^d-A_{d-1}T^{d-1}-\cdots - A_0=(T-\alpha_1)^{k_1}(T-\alpha_2)^{k_2}\cdots(T-\alpha_t)^{k_t},
	$
	where $k_1,\ldots,k_t\in\mathbb{N}$. Then $G_n(x)$ admits a representation of the form
	$
	G_n(x)=a_1\alpha_1^n+a_2\alpha_2^n+\cdots+a_t\alpha_t^n.
	$
	We say that the recurrence is {minimal} if $(G_n)_{n=0}^{\infty}$ does not satisfy a recurrence relation with smaller $d$ and coefficients in $\mathbb{C}[x]$.
	We say that the recurrence is {non-degenerate} if $\alpha_i/\alpha_j\not\in \mathbb{C}^*$ for all $i\neq j$.
	We say that the recurrence is {simple} if $k_1=\cdots=k_t=1$; in this case the $a_i$'s lie in $\mathbb{C}(x,\alpha_1,\ldots,\alpha_t)$.
	We say that the recurrence is a {polynomial power sum} if $a_1,\ldots,a_d\in\mathbb{C}$ and $\alpha_1,\ldots,\alpha_d\in\mathbb{C}[x]$.
	We say that a polynomial power sum satisfies the {dominant root condition} if $\deg(\alpha_1)>\deg(\alpha_i)$ for $i>1$.
	As an important starting point and motivation we mention that for a given sequence $(G_n)_{n=0}^\infty$ the decompositions of the form $G_n(x)=G_m\circ h$ for a fixed polynomial $h\in\mathbb{C}[x], \deg h\geq 2$ were considered by Peth\H{o}, Tichy and the first author in a series of papers \cite{FPT1,F,FPT2,FPT3}. It was again Zannier \cite{Z2} who proved in general that this equation has only finitely many solutions $(n,m),n\neq m$, unless $h$ is cyclic or dihedral; in this case there are infinitely many solutions coming from a generic equation.
	Moreover, one has to take the following {trivial situations} into account:
	If $G_m(x)\in\mathbb{C}[h(x)]$ for every $m\in\mathbb{N}$, then it is not possible to bound the degree of $g$ independently of $n$ assuming $G_n=g\circ h$.
	If $G_n(x)=g(H_n(x))$ with $g\in\mathbb{C}[x],\deg g=m$ and $(H_n)_{n=0}^\infty$ is another linear recurrence sequence in $\mathbb{C}[x]$, then obviously we again have a sought decomposition for every $n\in\mathbb{N}$. Consider as a nice example the Fibonacci polynomials $F_n$ defined by $F_0(x)=0, F_1(x)=1,$ $F_{n+2}(x)=x F_{n+1}(x)+F_{n}(x)$. It is easy to see that for all odd $n\ge 3$, $F_n$ is an even polynomial of degree $n-1$, and hence if $n\ge5$ is odd, $F_n(x)$ can be written as $F_n(x)=g\circ h$, where $h(x)=x^2$ and $\deg g=(n-1)/2$. Observe that $h$ is cyclic and  that the degree of $g$ cannot be bounded independently of $n$ assuming $F_n(x)=g(h(x))$ and $\deg h>1$.
	Also, for Chebyshev polynomials $T_n$ it is well-known that $T_{mn}(x)=T_m\circ T_n$ for any $m, n\in \mathbb{N}$. Observe that $h$ is dihedral and, since $\deg T_n=n$, one cannot bound $\deg g$ independently of $n$ assuming $T_n(x)=g(h(x))$ and $\deg h>1$.
	
	The main result of the first author proved together with Karolus and Kreso in \cite{FKK} is the following:
	Let $(G_n)_{n=0}^\infty$ be a minimal non-degenerate simple second order linear recurrence sequence. Assume that $G_n$ is decomposable for some $n\in \mathbb{N}$  and write $G_n(x)=g(h(x))$, where $h$ is indecomposable.
	Since $\deg h\geq 2$, there exists a root $y\neq x$ in its splitting field over $\mathbb{C}(h(x))$. Clearly, $h(x)=h(y)$.
	We have $G_n(x)=\pi_1\alpha_1^n+\pi_2\alpha_2^n$.
	Conjugating (in some fixed algebraic closure of $\mathbb{C}(x)$ containing $\alpha_1,\alpha_2$) over $\mathbb{C}(h(x))$ via $x\mapsto y$, we get a sequence $(G_n(y))_{n=0}^\infty$ with $G_n(y)\in\mathbb{C}[y]$, which satisfies the same minimal non-degenerate simple recurrence relation as $(G_n(x))_{n=0}^\infty$ with $x$ replaced by $y$. We conclude that $G_n(y)=\rho_1\beta_1^n+\rho_2\beta_2^n$. Since $h(x)=h(y)$, we get $G_n(x)=G_n(y)$, that is
	\begin{equation}
		\label{p2-eq:sumstar}
		\pi_1\alpha_1^n + \pi_2\alpha_2^n=\rho_1\beta_1^n+\rho_2\beta_2^n.\tag{$\star$}
	\end{equation}
	Then there is a positive real constant $C=C(\{A_i, G_i; i=1,2\})$ with the following property: If for some $n$ we have $G_n(x)=g(h(x))$, where $h$ is indecomposable and neither dihedral nor cyclic, and if \eqref{p2-eq:sumstar} has no proper vanishing subsum, then it holds that $\deg g\leq C$.
	We remark that if $h$ is not cyclic, then equation \eqref{p2-eq:sumstar} has a proper vanishing subsum if and only if
	$
	\pi_1\pi_2A_0(x)^n\in \mathbb{C}(h(x)).
	$
	In particular, the existence of a proper vanishing subsum does not depend on the choice of the conjugate $y$ of $x$ over $\mathbb{C}(h(x))$.
	However, \eqref{p2-eq:sumstar} clearly depends on $n$ and $h$ for which $G_n(x)=g(h(x))$ which are not known a priori.
	Note that if $h$ is not cyclic and $A_0(x)=a_0\in \mathbb{C}$, $\pi_1\pi_2=\pi\in \mathbb{C}$,  then there  exists a vanishing subsum of \eqref{p2-eq:sumstar} and one cannot apply the theorem in question; for example, this is the case for Chebyshev polynomials $T_n$.
	It is possible to give sufficient conditions in which \eqref{p2-eq:sumstar} has no proper vanishing subsum. We do not give further details here.

	Furthermore in \cite{FK} the first author and Karolus proved the following:
	Let $(G_n)_{n=0}^\infty$ be a non-degenerate polynomial power sum which satisfies the dominant root condition.
	Moreover, let $m\geq 2$ be an integer. Write $m_0$ for the least integer such that $\alpha_1^{m_0/m}\in \mathbb{C}[x]$. Then there is an effectively computable positive constant $C$ such that the following holds: Assume that for some $n\in\mathbb{N}$ with $n>C$ we have $G_n(x)=g\circ h$ with $\deg g=m,\deg h>1$. Then there are $c_1,\ldots,c_l\in\mathbb{C}$ such that \[h(x)=c_1\gamma_1^\ell+\cdots +c_l\gamma_l^\ell,\] where $m_0\ell=n$ and $l\in\mathbb{N}$ is bounded explicitly in terms of $m,d$ and $\deg(\alpha_1)+\cdots+\deg(\alpha_d)$ and $\gamma_1,\ldots,\gamma_l\in\mathbb{C}(x)$ can be given explicitly in terms of $\alpha_1,\ldots,\alpha_d$, both independently of $n$.
	Furthermore, it follows that there is an explicitly computable positive constant $C$, and a subvariety $\mathcal{V}$ of $\mathbb{A}^{l+m+1}\times\mathbb{G}_\m^{t}$ with $t,l$ bounded explicitly in terms of $m,d$ and $\deg(\alpha_1)+\cdots+\deg(\alpha_d)$ for which a system of polynomial-exponential equations in the polynomial variables $c_1,\ldots,c_l,g_0,\ldots,g_m$ and the exponential variable $\ell$ (with coefficients in $\mathbb{Q}$) can be written down explicitly such that the following holds:
	\begin{compactitem}
		\item[a)] Defining $G(x)=g_0x^m+g_1x^{m-1}+\cdots +g_m\in\mathbb{C}[\mathcal{V}][x]$ and $H_\ell=c_1\gamma_1^\ell+c_2\gamma_2^\ell+\cdots +c_l\gamma_l^\ell\in\mathbb{C}[\mathcal{V}][x]$, where $\gamma_1,\ldots,\gamma_l\in\mathbb{C}(x)$ can be given explicitly in terms of $\alpha_1,\ldots,\alpha_d$, then $G_{m_0\ell}=G\circ H_\ell$ holds as an equation in $x$ with coefficients in the coordinate ring of $\mathcal{V}$. In particular, for any point $P=(c_1,\ldots,c_l,g_0,\ldots,g_m,\ell)\in\mathcal{V}(\mathbb{C})$ we get a decomposition $G_n(x)=g\circ h$,  $g(x)=G(P,x)\in\mathbb{C}[x]$ and $h(x)=H_l(P,x)\in\mathbb{C}[x]$ (with $n=m_0\ell$).
		\item[b)] Conversely, let $G_n(x)=g\circ h$ be a decomposition of $G_n(x)$ for some $n\in\mathbb{N}$ with $g,h\in\mathbb{C}[x],\deg g=m, \deg h>1$. Then either $n\leq C$ or there exists a point $P=(g_0,\ldots,g_m,c_1,\ldots,c_l,\ell)\in\mathcal{V}(\mathbb{C})$ with $g(x)=G(P,x)$ and $h(x)=H_\ell(P,x)$ and $n=m_0\ell$.
	\end{compactitem}
	A number of interesting special cases follow and are discussed, in particular that the results include a description in finite terms of all $m$-th powers in a linear recurrence sequence of polynomials satisfying the conditions of the theorem.

	Observe that in \cite{FKK} only {binary} recurrences are covered and that in \cite{FK} the order is not restricted but instead only {polynomials power sums satisfying the dominant root condition} are handled.

	In this paper we revisit the situation when the outer polynomial is fixed to be $ g(x) = x^m $. We first quickly review the situation for lacunary polynomials. Here it is natural to consider a non-constant complex polynomial with constant term equal to $1$ and with $k$ additional non-constant terms. The results \cite{S} and \cite{Z1} immediately imply that then $m\leq k$. For $k\leq 3$ a precise classification of all solutions can be found in \cite{CZ} (see Lemma 2.1). The analogous result for $k=4$ was given in the recent PhD thesis \cite{M}. Now we turn back to polynomial power sums. Assuming the dominant root condition we will prove that the second case which states that $ h $ is of a special form cannot occur in this setting. For the binary case we will be able to prove a stronger result than for the general one of order greater than two. We are going to give a counterexample which shows that the stronger result is in general not true for the case of an arbitrary order $ d $ of the linear recurrence sequence.

	\section{Results}
	
	During the whole paper we are implicitly assuming that the polynomial $ h(x) $ has degree $ \deg h \geq 2 $. Let us now first state our two results that we are going to prove in the next section: We start with the situation of binary recurrences.
	
	\begin{mythm}
		\label{p2-thm:binarycase}
		Let $ (G_n(x))_{n=0}^{\infty} $ be a non-degenerate simple linear recurrence sequence of order $ d = 2 $ with power sum representation $ G_n(x) = a_1 \alpha_1^n + a_2 \alpha_2^n $ such that $ \alpha_1, \alpha_2 \in \CC[x] $ are polynomials and $ a_1, a_2 \in \CC(x) $ satisfy $ \frac{a_2}{a_1} \in \CC $. Assume furthermore that $ \deg \alpha_1 > \deg \alpha_2 $.
		Then there exists a constant $ C $, which depends only on $ \alpha_1, \alpha_2, a_1, a_2 $, such that for all $ n > C $ there is no integer $ m \geq 2 $ and no polynomial $ h(x) \in \CC[x] $ with the property $ G_n(x) = (h(x))^m $. In particular this implies that for $ m $ large enough there is no index $ n $ and no polynomial $ h(x) \in \CC[x] $ such that $ G_n(x) = (h(x))^m $.
	\end{mythm}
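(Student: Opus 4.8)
The plan is to derive a contradiction for large $ n $ by estimating the degree of the radical $ \operatorname{rad}(G_n) $ (the product of the distinct monic irreducible factors of $ G_n $) from both sides. If $ G_n(x)=(h(x))^m $ with $ m\geq 2 $, then $ \operatorname{rad}(G_n)=\operatorname{rad}(h) $, so a perfect power forces $ \deg\operatorname{rad}(G_n) $ to be small; I will in fact show it is then bounded by a constant independent of $ n $, and then contradict this by showing $ \deg\operatorname{rad}(G_n) $ grows at least linearly in $ n $.

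For the upper bound I would use a differentiation identity special to order two. Write $ a_2=c\,a_1 $ with $ c\in\CC^* $ (note $ c\neq 0 $, otherwise the sequence would have order one), let $ q\in\CC[x] $ be a common denominator of $ a_1 $, and put $ W:=\alpha_1\alpha_2'-\alpha_1'\alpha_2 $; then $ W\neq 0 $ since $ \deg\alpha_1>\deg\alpha_2 $ makes $ \alpha_1/\alpha_2 $ non-constant. Differentiating $ G_n=a_1\alpha_1^n+a_2\alpha_2^n $ and eliminating $ \alpha_1^n $ between $ G_n $ and $ G_n' $ gives
\[
 a_1\alpha_1\,G_n'-(a_1'\alpha_1+na_1\alpha_1')\,G_n=\alpha_2^{n-1}\Big(\alpha_1\alpha_2\,(a_1a_2'-a_1'a_2)+n\,a_1a_2\,W\Big),
\]
and the crucial observation is that $ a_2/a_1\in\CC $ forces $ a_1a_2'-a_1'a_2\equiv 0 $, so the right-hand side collapses to $ nc\,a_1^2\,W\,\alpha_2^{n-1} $. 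Multiplying through by $ q^2 $ turns this into an identity between polynomials. Since $ G_n=h^m $ implies $ h^{m-1}\mid G_n $ and $ h^{m-1}\mid G_n'=mh^{m-1}h' $, the polynomial $ h^{m-1} $ divides the left-hand side, hence divides $ nc\,(qa_1)^2\,W\,\alpha_2^{n-1} $, which is a nonzero polynomial for every $ n\geq 1 $. Passing to radicals, $ \operatorname{rad}(h)=\operatorname{rad}(h^{m-1}) $ divides the fixed polynomial $ \operatorname{rad}\big((qa_1)\,W\,\alpha_2\big) $, so $ \deg\operatorname{rad}(G_n)=\deg\operatorname{rad}(h)\leq \deg(qa_1)+\deg W+\deg\alpha_2=:R_0 $, a constant depending only on $ \alpha_1,\alpha_2,a_1,a_2 $.

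For the lower bound I would invoke the Mason--Stothers theorem. Put $ \delta:=\gcd(\alpha_1,\alpha_2) $ and write $ \alpha_i=\delta\tilde\alpha_i $ with $ \gcd(\tilde\alpha_1,\tilde\alpha_2)=1 $, so that $ G_n=a_1\delta^nM_n $ with $ M_n:=\tilde\alpha_1^n+c\tilde\alpha_2^n $. Because $ c\neq 0 $ and $ \gcd(\tilde\alpha_1,\tilde\alpha_2)=1 $, the polynomials $ \tilde\alpha_1^n $, $ c\tilde\alpha_2^n $ and $ M_n $ are pairwise coprime; they are not all constant since the dominant root condition yields $ \deg\tilde\alpha_1=\deg\alpha_1-\deg\delta\geq \deg\alpha_1-\deg\alpha_2\geq 1 $; and $ M_n\neq 0 $ for all $ n $ because $ \tilde\alpha_1/\tilde\alpha_2 $ is non-constant. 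Mason--Stothers applied to $ \tilde\alpha_1^n+c\tilde\alpha_2^n=M_n $ then gives $ n\deg\tilde\alpha_1\leq \deg\operatorname{rad}(\tilde\alpha_1)+\deg\operatorname{rad}(\tilde\alpha_2)+\deg\operatorname{rad}(M_n)-1 $, that is, $ \deg\operatorname{rad}(M_n)\geq (n-1)\deg\tilde\alpha_1-\deg\tilde\alpha_2+1 $. Every irreducible factor $ \pi $ of $ M_n $ with $ \pi\nmid q $ also divides $ G_n=a_1\delta^nM_n $ (the $ \pi $-adic valuation of $ a_1\delta^n $ is $ \geq 0 $ and that of $ M_n $ is $ \geq 1 $), so $ \deg\operatorname{rad}(G_n)\geq \deg\operatorname{rad}(M_n)-\deg q\geq (n-1)\deg\tilde\alpha_1-\deg\tilde\alpha_2+1-\deg q $.

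Comparing the two inequalities and using $ \deg\tilde\alpha_1\geq 1 $ yields $ n\leq R_0+\deg\tilde\alpha_2+\deg q=:C $, which proves the theorem; the final ``in particular'' follows because for the finitely many $ n\leq C $ the degrees $ \deg G_n $ are bounded by some $ D $, while $ G_n=h^m $ with $ \deg h\geq 2 $ forces $ 2m\leq \deg G_n $, so nothing is possible once $ m>D/2 $. I expect the main obstacle to be locating the identity in the second step: the vanishing of the cross term $ a_1a_2'-a_1'a_2 $ is precisely the feature of order two with $ a_2/a_1\in\CC $, and it is what promotes a bound on $ m $ (all one gets in the general-order case) to a bound on $ n $. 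A secondary technical point is that one must first strip off the common factor $ \delta^n $ of $ \alpha_1^n $ and $ \alpha_2^n $, since applied directly to $ \alpha_1^n+c\alpha_2^n $ the Mason--Stothers bound is useless when $ \alpha_1 $ and $ \alpha_2 $ share a large factor.
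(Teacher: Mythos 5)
Your proposal is correct, but it takes a genuinely different route from the paper. The paper writes $h=\zeta G_n^{1/m}$, expands by the binomial series, passes to the Kummer extensions $\CC(x,\alpha_1(x)^{1/m})$ and $F(a_1^{1/m})$ with the genus estimates of Proposition \ref{p2-prop:kummerext}, and applies Zannier's function-field analogue of the subspace theorem (Proposition \ref{p2-prop:functionfieldsubspace}); this either bounds $n$ directly or forces $h$ to be a $\CC$-linear combination of finitely many terms $t_i$, a case that is then excluded by comparing degrees and leading coefficients. You instead bound $\deg\operatorname{rad}(G_n)$ from two sides: from above, the Wronskian elimination of $\alpha_1^n$ between $G_n$ and $G_n'$ --- where the hypothesis $a_2/a_1\in\CC$ is exactly what kills the cross term $a_1a_2'-a_1'a_2$ --- shows that $h^{m-1}$ divides the nonzero polynomial $nc(qa_1)^2W\alpha_2^{n-1}$, so $\operatorname{rad}(G_n)=\operatorname{rad}(h)$ has degree bounded independently of $n$ and $m$; from below, Mason--Stothers applied to $\tilde\alpha_1^{\,n}+c\,\tilde\alpha_2^{\,n}=M_n$ (after stripping the common factor $\delta^n$, a precaution you rightly flag as necessary) shows $\deg\operatorname{rad}(G_n)$ grows linearly in $n$. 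I checked the elimination identity, the polynomiality after multiplying by $q^2$, and the coprimality/non-constancy hypotheses of Mason--Stothers; all are in order (like the paper's own final degree comparison, your claim $W\neq 0$ implicitly uses $a_2\neq 0$ and $\alpha_2\neq 0$, which is part of the standard reading of a simple non-degenerate binary recurrence), and the concluding reduction of the ``in particular'' statement to $2m\le\deg G_n$ for the finitely many $n\le C$ matches what the paper does. What your approach buys: it is elementary (no subspace theorem, no genus computations, no linear-dependence case analysis), it yields an explicit bound $C$ that is linear in the degrees and heights of the data rather than the cubic bound \eqref{p2-eq:boundforn}, and it does not even need $\deg h\ge 2$. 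What the paper's approach buys: it is the one that generalizes --- the same expansion-plus-subspace machinery proves Theorem \ref{p2-thm:generalcase} for arbitrary order $d$ and the structural results of \cite{FK}, whereas your derivative trick is tailored to the binary case with $a_2/a_1\in\CC$, since eliminating $d-1$ dominated roots would require higher-order Wronskians and the cross terms no longer vanish for free.
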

	
	In the case of recurrences of arbitrary large order we prove a slightly weaker result (essentially, the final conclusion in the previous theorem).

	\begin{mythm}
		\label{p2-thm:generalcase}
		Let $ (G_n(x))_{n=0}^{\infty} $ be a non-degenerate simple linear recurrence sequence of order $ d \geq 3 $ with power sum representation $ G_n(x) = a_1 \alpha_1^n + \cdots +  a_d \alpha_d^n $ such that $ \alpha_1, \ldots, \alpha_d \in \CC[x] $ are polynomials and $ a_1, \ldots, a_d \in \CC $ are constant. Assume furthermore that $ \deg \alpha_1 > \deg \alpha_2 > \deg \alpha_3 \geq \deg \alpha_4 \geq \cdots \geq \deg \alpha_d $.
		Then for $ m $ large enough there is no index $ n $ and no polynomial $ h(x) \in \CC[x] $ such that $ G_n(x) = (h(x))^m $.
	\end{mythm}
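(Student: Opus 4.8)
Rather than invoking the full dichotomy of \cite{FK}, the plan is to run a direct Mason--Stothers type estimate anchored at the dominant root $\alpha_1$; this uses only the inequality $\deg\alpha_1>\deg\alpha_2$, while the conditions on $\alpha_3,\dots,\alpha_d$ serve only to keep the ``tail'' $S_n:=a_2\alpha_2^n+\cdots+a_d\alpha_d^n$ under control. First I would record the immediate consequences of the dominant-root condition: since $\alpha_1$ strictly dominates, $\deg G_n=n\deg\alpha_1$ for $n\ge 1$ (while $G_0$ is a constant), and $\deg S_n=n\deg\alpha_2<n\deg\alpha_1$ with $S_n\neq 0$ for $n\ge 1$; in particular $\deg\alpha_1\ge 2$.

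Now suppose $G_n=h(x)^m$ for some $n$, some $m\ge 2$, and some $h\in\CC[x]$ with $\deg h\ge 2$. Then $h^{m-1}$ divides $G_n=h^m$, and since $G_n'=m\,h^{m-1}h'$ it also divides $G_n'$; hence $h^{m-1}$ divides
\[
D_n:=\alpha_1 G_n'-n\alpha_1' G_n=\alpha_1^{\,n+1}\left(\frac{G_n}{\alpha_1^{\,n}}\right)'=\alpha_1 S_n'-n\alpha_1' S_n ,
\]
the last equality using $G_n=a_1\alpha_1^n+S_n$. Because $G_n/\alpha_1^{\,n}=a_1+S_n/\alpha_1^{\,n}$ is a non-constant rational function (numerator degree strictly below denominator degree, and $S_n\neq 0$), $D_n$ is a nonzero polynomial, and from the right-hand expression $\deg D_n\le\deg\alpha_1+n\deg\alpha_2-1$. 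Therefore $(m-1)\deg h\le\deg\alpha_1+n\deg\alpha_2-1$, and inserting $m\deg h=\deg G_n=n\deg\alpha_1$ and rearranging gives
\[
m\le\frac{n\deg\alpha_1}{\,n(\deg\alpha_1-\deg\alpha_2)-(\deg\alpha_1-1)\,},
\]
the denominator being positive for every $n\ge\deg\alpha_1$ since $\deg\alpha_1-\deg\alpha_2\ge 1$.

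It remains to see that the right-hand side is bounded: for $n\ge 2\deg\alpha_1$ the denominator exceeds $n/2$, so $m<2\deg\alpha_1$, whereas for $1\le n<2\deg\alpha_1$ one simply has $2m\le m\deg h=n\deg\alpha_1<2(\deg\alpha_1)^2$; and $n=0$ is vacuous because $G_0$ is constant while $\deg h\ge 2$. Hence $G_n=h^m$ forces $m<(\deg\alpha_1)^2$, which is the theorem in contrapositive form, with a threshold depending only on the sequence. (Equivalently one can argue through the radical: $\operatorname{rad}(G_n)=G_n/\gcd(G_n,G_n')$ has degree at least $n(\deg\alpha_1-\deg\alpha_2)-(\deg\alpha_1-1)$ because $\gcd(G_n,G_n')\mid D_n$, while $G_n=h^m$ forces $m\deg\operatorname{rad}(G_n)\le\deg G_n$.) The one delicate point will be precisely the nonvanishing of $D_n$ together with the bound $\deg D_n\le\deg\alpha_1+n\deg\alpha_2-1$: this is where the dominant-root hypothesis really enters, guaranteeing both that $G_n$ is not a scalar multiple of $\alpha_1^n$ and that no unexpected cancellation inflates $D_n$. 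Beyond that I do not expect a serious obstacle; note that this argument in particular rules out the ``special-form'' alternative for $h$ of \cite{FK} once $m$ is large, as announced in the introduction.
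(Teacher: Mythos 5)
Your proposal is correct, but it takes a genuinely different route from the paper. The paper follows \cite{FK}: it expands $h=\zeta G_n^{1/m}$ via the multinomial series, works in the Kummer extension $\CC(x,\alpha_1^{1/m})$, estimates the genus, and applies Zannier's function-field analogue of the subspace theorem (Proposition \ref{p2-prop:functionfieldsubspace}), which either bounds $n$ or forces $h$ into the special shape $\sum_i c_i t_{e_i}(x)$; the latter case is then excluded by a degree comparison, and finally $m$ is taken large enough to dispose of the finitely many remaining indices $n$. You instead run an elementary multiplicity (Mason--Stothers type) argument anchored at the dominant root: from $G_n=h^m$ you get $h^{m-1}\mid \alpha_1 G_n'-n\alpha_1'G_n=\alpha_1 S_n'-n\alpha_1'S_n$, a nonzero polynomial of degree at most $\deg\alpha_1+n\deg\alpha_2-1$, and comparison with $m\deg h=n\deg\alpha_1$ gives the explicit bound $m<(\deg\alpha_1)^2$, uniformly in $n$. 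Your verification points check out: $S_n\neq0$ with $\deg S_n=n\deg\alpha_2$ follows from $\deg\alpha_2>\deg\alpha_3$ together with $a_2\neq0$ (which, like $a_1\neq0$, is implicit in the sequence having order exactly $d$ and is used by the paper's own degree comparisons as well); the nonvanishing of the Wronskian-type combination holds because $S_n/\alpha_1^{\,n}$ is a non-constant rational function in characteristic $0$; and the standing convention $\deg h\ge2$ covers the range $1\le n<2\deg\alpha_1$, while $n=0$ is vacuous. As for what each approach buys: yours is self-contained, avoids the function-field machinery entirely, and yields a clean explicit threshold for $m$ independent of $n$ (consistent with the example $G_n=(x^n+1)^m$, which shows that $n$ itself cannot be bounded when $d\ge3$); the paper's heavier method is the specialization of a general decomposition framework (arbitrary outer factor $g$ of degree $m$, not only $g=x^m$) and, in the linearly independent case, additionally bounds the index $n$, which is exactly the extra leverage behind the stronger binary statement of Theorem \ref{p2-thm:binarycase}.
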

	
	In the case of a non-degenerate simple linear recurrence sequence of order $ d \geq 3 $ we cannot give in general a bound $ C $ for the index $ n $ such that all elements of the sequence $ (G_n(x))_{n=0}^{\infty} $ with index $ n > C $ are no proper powers.
	Consider for instance the third order sequence given by $ G_n(x) = (x^n+1)^2 = (x^2)^n + 2x^n + 1^n $ which has the property that each element is at least a square.
	We can easily modify this example to generate counterexamples for any fixed parameter $ m $ if we consider $ G_n(x) = (x^n+1)^m $.
	
	For the proof we mainly follow the proof of \cite{FK}.
	Therefore, we start from $G_n(x)=h(x)^m$. Thus $h(x)=\zeta G_n(x)^{1/m}$ (as formal power series).
	Then one uses the multinomial series to expand $G_n(x)^{1/m}$; in order to justify this multiple expansion, the dominant root condition on the degrees of the characteristic roots is needed. Afterwards a function field variant of the Schmidt subspace theorem (due to Zannier, see Proposition \ref{p2-prop:functionfieldsubspace} below) is used, to find that either $n$ is bounded or $h$ can be expressed in the form $c_0t_0+c_1t_1+\cdots +c_{L-1}t_{L-1}$, where $c_i\in\CC$ and $t_i$ come from a finite set. We have to show that the latter case is impossible. Plugging the expression for $h(x)$ into $G_n(x)=h(x)^m$ and comparing degrees and leading coefficients gives the result.

	The proof of the two theorems are quite similar, the difference involve some subtleties that we try to work out. We remark that the method of proof exactly requires $(G_n(x))_{n=0}^\infty$ to be a polynomial power sum with dominant root condition.

	\section{Preliminaries}

	In the sequel we will need the following notations: For $ c \in \CC $ and $ f(x) \in \CC(x) $ where $ \CC(x) $ is the rational function field over $ \CC $ denote by $ \nu_c(f) $ the unique integer such that $ f(x) = (x-c)^{\nu_c(f)} p(x) / q(x) $ with $ p(x),q(x) \in \CC[x] $ such that $ p(c)q(c) \neq 0 $. Further denote by $ \nu_{\infty}(f) = \deg q - \deg p $ if $ f(x) = p(x) / q(x) $.
	These functions $ \nu $ are up to equivalence all valuations on $ \CC(x) $. If $ \nu_c(f) > 0 $, then $ c $ is called a zero of $ f $, and if $ \nu_c(f) < 0 $, then $ c $ is called a pole of $ f $.
	For a finite extension $ F $ of $ \CC(x) $ each valuation on $ \CC(x) $ can be extended to no more than $ [F : \CC(x)] $ valuations on $ F $. This again gives all valuations on $ F $.
	Both, in $ \CC(x) $ as well as in $ F $ the sum-formula
	\begin{equation*}
		\sum_{\nu} \nu(f) = 0
	\end{equation*}
	holds, where $ \sum_{\nu} $ means that the sum is taken over all valuations on the considered function field.
	Each valuation on a function field corresponds to a place and vice versa. The set of all places of the function field $ F $ is denoted by $ \PP_F $.
	If $ F' $ is a finite extension of $ F $, then we say that $ P' \in \PP_{F'} $ lies over $ P \in \PP_F $ if $ P \subseteq P' $ and denote this fact by $ P' \mid P $.
	In this case there exists an integer $ e(P' \mid P) $, the so-called ramification index of $ P' $ over $ P $, such that for all $ x \in F $ the equality $ \nu_{P'}(x) = e(P' \mid P) \cdot \nu_P(x) $ holds.
	
	To prepare the proofs of our two theorems we present subsequently three auxiliary results that are used in \cite{FK} as well. The first one also can be found in \cite{St}:
	
	\begin{myprop}
		\label{p2-prop:kummerext}
		Let $ F/\CC $ be a function field in one variable. Suppose that $ u \in F $ satisfies $ u \neq w^d $ for all $ w \in F $ and $ d \mid n $, $ d > 1 $. Let $ F' = F(z) $ with $ z^n = u $. Then $ F' $ is said to be a Kummer extension of $ F $ and we have:
		\begin{enumerate}[a)]
			\item The polynomial $ \varphi(T) = T^n - u $ is the minimal polynomial of $ z $ over $ F $ (in particular, it is irreducible over $ F $). The extension $ F'/F $ is Galois of degree $ n $; its Galois group is cyclic and all automorphisms of $ F'/F $ are given by $ \sigma(z) = \zeta z $, where $ \zeta \in \CC $ is an $ n $-th root of unity.
			\item Let $ P \in \PP_F $ and $ P' \in \PP_{F'} $ be an extension of $ P $. Let $ r_P := \gcd \setb{n, \nu_P(u)} $. Then $ e(P'|P) = n/r_P $.
			\item Denote by $ \gfr $ (resp. $ \gfr' $) the genus of $ F/\CC $ (resp. $ F'/\CC $). Then
			\begin{equation*}
				\gfr' = 1 + n(\gfr-1) + \frac{1}{2} \sum_{P \in \PP_F} (n-r_P) \deg P.
			\end{equation*}
		\end{enumerate}
	\end{myprop}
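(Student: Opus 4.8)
The statement is the standard structure theory of Kummer extensions (as noted, it is contained in \cite{St}), so the plan is to assemble it from three classical ingredients: the irreducibility criterion for binomials $T^n-u$, a local computation via completions for the ramification indices, and the Riemann--Hurwitz genus formula.

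For part~a) I would invoke the classical criterion for binomials: over a field $F$ the polynomial $T^n-u$ is irreducible as soon as $u\notin F^p$ for every prime $p\mid n$ and, in case $4\mid n$, also $u\notin -4F^4$. The first batch of conditions is exactly the hypothesis $u\neq w^d$ for $w\in F$, $d\mid n$, $d>1$ (specialised to $d=p$), while the second is automatic here because $-4=(1+i)^4\in F^4$ (as $i\in\CC\subseteq F$) and $u\notin F^2$, whence $u\notin F^4=-4F^4$. Hence $[F':F]=n$. Since $\CC$ contains all $n$-th roots of unity, $F'$ is the splitting field of the separable polynomial $T^n-u$ over $F$, so $F'/F$ is Galois; the assignment $\sigma\mapsto\sigma(z)/z$ is a homomorphism $\operatorname{Gal}(F'/F)\to\mu_n\subseteq\CC^*$ which is injective (because $z$ generates $F'$) and hence, comparing orders, an isomorphism. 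This yields the cyclic Galois group together with the description of its automorphisms.

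For part~b) the efficient route is to localise. Fix $P\in\PP_F$ and $P'\mid P$ and pass to completions, writing $\widehat F_P\cong\CC((t))$ with $\nu_P(t)=1$; then $e(P'\mid P)$ is the ramification index of the local extension $\widehat F_{P'}/\widehat F_P$, which corresponds to an irreducible factor of $T^n-u$ over $\widehat F_P$. Write $u=t^{\nu_P(u)}v$ with $v$ a unit of $\CC[[t]]$. Because the residue field $\CC$ is algebraically closed of characteristic $0$, Hensel's lemma produces $w\in\CC[[t]]^*$ with $w^n=v$, and replacing $z$ by $z/w$ we may assume $z^n=t^{\nu_P(u)}$. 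Setting $r:=r_P=\gcd(n,\nu_P(u))$, $n=rn_1$, $\nu_P(u)=rm_1$ with $\gcd(m_1,n_1)=1$, we get $(z^{n_1}/t^{m_1})^r=1$, so $z^{n_1}=\zeta t^{m_1}$ for a root of unity $\zeta\in\CC$; after a further rescaling of $z$ by a suitable root of unity we may assume $z^{n_1}=t^{m_1}$. Choosing $a,b\in\ZZ$ with $am_1+bn_1=1$ and putting $s:=z^at^b$ gives $s^{n_1}=t$ and $z=\zeta's^{m_1}$ for some root of unity $\zeta'$, so $\widehat F_P(z)=\CC((s))$ with $s^{n_1}=t$, the standard totally ramified extension of degree $n_1$. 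Therefore $e(P'\mid P)=n_1=n/r_P$.

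For part~c) I would apply the Riemann--Hurwitz formula $2\gfr'-2=n(2\gfr-2)+\deg\operatorname{Diff}(F'/F)$. In characteristic $0$ all ramification is tame, so the different exponent of $P'\mid P$ equals $e(P'\mid P)-1=n/r_P-1$; moreover, since the residue field is $\CC$ throughout, $f(P'\mid P)=1$, hence $\deg P'=\deg P$ and (from $\sum_{P'\mid P}e(P'\mid P)=n$) there are exactly $r_P$ places above $P$. Thus the contribution of $P$ to $\deg\operatorname{Diff}(F'/F)$ is $r_P\bigl(n/r_P-1\bigr)\deg P=(n-r_P)\deg P$, and summing over $P\in\PP_F$ and dividing by $2$ gives $\gfr'=1+n(\gfr-1)+\tfrac12\sum_{P\in\PP_F}(n-r_P)\deg P$. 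The one point where care is genuinely needed is part~b): it is precisely the three features of $\CC$ being algebraically closed of characteristic zero --- Hensel's lemma for the reduction $v=w^n$, the availability of roots of unity for the two rescalings, and the absence of wild ramification for the different in part~c) --- that collapse the local picture to $\CC((s))/\CC((s^{n_1}))$; everything else is bookkeeping.
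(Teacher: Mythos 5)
Your argument is correct, but note that the paper itself gives no proof of this proposition: it is quoted as a known auxiliary result with a reference to Stichtenoth \cite{St}, whose treatment of Kummer extensions follows essentially the same lines you propose (binomial irreducibility plus the Kummer pairing for a), a local/tame ramification computation for b), and the Hurwitz genus formula with the tame different for c)). Your write-up fills in the details accurately — including the check that the condition $u\notin -4F^4$ is automatic since $-4=(1+i)^4\in F^4$, and the constant rescalings of $z$ that leave the completed field $\widehat{F}_P(z)$ unchanged — so it can stand as a self-contained proof of the cited result.
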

	
	The following proposition can be seen as a function field analogue of the Schmidt subspace theorem. It will play an important role in our proofs. The reader can find a proof for it in \cite{Z1}:
	
	\begin{myprop}[Zannier]
		\label{p2-prop:functionfieldsubspace}
		Let $ F/\CC $ be a function field in one variable, of genus $ \gfr $, let $ \varphi_1, \ldots, \varphi_n \in F $ be linearly independent over $ \CC $ and let $ r \in \set{0,1, \ldots, n} $. Let $ S $ be a finite set of places of $ F $ containing all the poles of $ \varphi_1, \ldots, \varphi_n $ and all the zeros of $ \varphi_1, \ldots, \varphi_r $. Put $ \sigma = \sum_{i=1}^{n} \varphi_i $. Then
		\begin{equation*}
			\sum_{\nu \in S} \left( \nu(\sigma) - \min_{i=1, \ldots, n} \nu(\varphi_i) \right) \leq \bino{n}{2} (\abs{S} + 2\gfr - 2) + \sum_{i=r+1}^{n} \deg (\varphi_i).
		\end{equation*}
	\end{myprop}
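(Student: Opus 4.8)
The natural tool is the Wronskian, which turns $\CC$-linear independence into a non-vanishing determinant and converts the additive relation defining $\sigma$ into a multiplicative (divisor) statement amenable to the sum formula. First I would fix a separating element $x$, so that $F/\CC(x)$ is finite and separable, and let $D=d/dx$ be the associated derivation; since $\CC$ is algebraically closed of characteristic zero, it is exactly the constant field $\ker D$, every place of $F$ has degree one, and $\deg(\operatorname{div}(dx))=2\gfr-2$. Because $\varphi_1,\ldots,\varphi_n$ are $\CC$-linearly independent, their Wronskian $W=\det\setb{D^{i-1}\varphi_j}_{1\le i,j\le n}$ is a nonzero element of $F$; this is the single step where characteristic zero is essential. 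A column operation, replacing the first column by the sum of all columns, shows that $W$ is unchanged if one replaces the first column $D^{i-1}\varphi_1$ by $D^{i-1}\sigma$, so that $\sigma$ itself enters the determinant in place of $\varphi_1$.

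Next I would pass to local coordinates. At a place $P$ with uniformizer $t$, the transformation law of the Wronskian under change of the independent variable gives $\nu_P(W)=-\binom{n}{2}\nu_P(dx)+\nu_P(W_t)$, where $W_t$ is the Wronskian formed with $d/dt$ and one uses $\nu_P(dt)=0$. Writing $w_0^{(P)}<\cdots<w_{n-1}^{(P)}$ for the order sequence of the $\CC$-span of $\varphi_1,\ldots,\varphi_n$ at $P$, that is the $n$ distinct values taken by $\nu_P$ on nonzero linear combinations, the classical local Wronskian formula reads $\nu_P(W_t)=\sum_{j=0}^{n-1}\setb{w_j^{(P)}-j}$. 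Summing $\nu_P(W)=0$ over all places (the sum formula) and inserting $\sum_P\nu_P(dx)=2\gfr-2$ produces the global identity $\sum_P\sum_{j}\setb{w_j^{(P)}-j}=\binom{n}{2}(2\gfr-2)$, which already isolates the genus contribution.

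It then remains to convert this into the stated inequality. At $P$ the quantity $\nu(\sigma)-\min_i\nu(\varphi_i)$ equals $w_{j_0}^{(P)}-w_0^{(P)}$, where $w_{j_0}^{(P)}=\nu_P(\sigma)$ is the value realized by the specific combination $\sigma=\sum_i\varphi_i$; since the $w_j^{(P)}$ are strictly increasing integers, this is bounded by the local excess $\sum_j\setb{w_j^{(P)}-w_0^{(P)}-j}$ plus a per-place term at most $\binom{n}{2}$. The decisive role of $S$ is now as follows: for $P\notin S$ no $\varphi_i$ has a pole and $\varphi_1,\ldots,\varphi_r$ have no zero, so $\min_i\nu_P(\varphi_i)=0$ and the only positive contribution to the local excess comes from common zeros of $\varphi_{r+1},\ldots,\varphi_n$, whose total over $P\notin S$ is controlled by $\sum_{i=r+1}^{n}\deg(\varphi_i)$. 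For $P\in S$ I would absorb the per-place overhead into $\binom{n}{2}$, and then combine with the global identity so that the genus term $\binom{n}{2}(2\gfr-2)$ and the $\abs{S}$ places merge into $\binom{n}{2}(\abs{S}+2\gfr-2)$.

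The main obstacle is precisely this bookkeeping of the valuations of derivatives: passing from the $\varphi_i$ to $W_t$ must be arranged so that the poles inside $S$ do not re-enter with large multiplicity, since a naive normalization by the local minimum $\min_i\nu_P(\varphi_i)$ reintroduces every pole with a factor $n$ and destroys the estimate. The correct route keeps the contributions of $S$ and of its complement strictly separate, invoking the non-negativity of the normalized local Wronskian order only off $S$, where that minimum vanishes. Matching the constant $\binom{n}{2}$ exactly, and accounting correctly for ramification (the different) in $\nu_P(dx)$ via Proposition \ref{p2-prop:kummerext}-type local data, is the delicate part of the argument.
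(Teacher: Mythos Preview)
The paper does not give its own proof of this proposition: it simply states the result and directs the reader to Zannier's paper \cite{Z1} for a proof. So there is no in-paper argument to compare against. That said, your Wronskian strategy is exactly the one Zannier uses there: form $W=W(\varphi_1,\ldots,\varphi_n)=W(\sigma,\varphi_2,\ldots,\varphi_n)$, compute $\nu_P(W_t)$ via the local order sequence $w_0^{(P)}<\cdots<w_{n-1}^{(P)}$, and convert the sum formula for $W$ together with $\deg(\operatorname{div}(dx))=2\gfr-2$ into the global identity $\sum_P\sum_j\bigl(w_j^{(P)}-j\bigr)=\binom{n}{2}(2\gfr-2)$.

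Two small comments on your sketch. First, the reference to Proposition~\ref{p2-prop:kummerext} for the ramification bookkeeping is misplaced: the fact that $\sum_P\nu_P(dx)=2\gfr-2$ is a general consequence of Riemann--Roch for any function field and has nothing to do with Kummer extensions; Proposition~\ref{p2-prop:kummerext} is used elsewhere in the paper only to bound the genus of the specific field $\CC(x,\alpha_1^{1/m})$. Second, your handling of the places $P\notin S$ is a bit loose: the correct point is that at such $P$ all $\varphi_i$ are regular and $\varphi_1,\ldots,\varphi_r$ are units, so $w_0^{(P)}=0$ and the nonnegative excess $\sum_j\bigl(w_j^{(P)}-j\bigr)=\nu_P(W_t)$ is bounded above, after summing over $P\notin S$, by $\sum_{i=r+1}^n\sum_{P\notin S}\nu_P(\varphi_i)\le\sum_{i=r+1}^n\deg(\varphi_i)$, because every column of $W_t$ corresponding to $i>r$ is divisible by the local power of $\varphi_i$ once one rewrites $D_t^{k}\varphi_i$ in terms of $\varphi_i$ and its logarithmic derivatives. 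With that refinement the constants fall out as stated; your own caveat that this bookkeeping is the delicate step is well taken.
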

	
	In the next section we will take use of height functions in function fields. Let us therefore define the height of an element $ f \in F^* $ by
	\begin{equation*}
		\Hc(f) := - \sum_{\nu} \min \setb{0, \nu(f)} = \sum_{\nu} \max \setb{0, \nu(f)}
	\end{equation*}
	where again the sum is taken over all valuations on the function field $ F / \CC $. Additionally we define $ \Hc(0) = \infty $.
	These height function satisfies some basic properties that are listed in the following lemma which is proven in \cite{FKK}:
	
	\begin{mylemma}
		\label{p2-lemma:heightproperties}
		Denote as above by $ \Hc $ the projective height on $ F/\CC $. Then for $ f,g \in F^* $ the following properties hold:
		\begin{enumerate}[a)]
			\item $ \Hc(f) \geq 0 $ and $ \Hc(f) = \Hc(1/f) $,
			\item $ \Hc(f) - \Hc(g) \leq \Hc(f+g) \leq \Hc(f) + \Hc(g) $,
			\item $ \Hc(f) - \Hc(g) \leq \Hc(fg) \leq \Hc(f) + \Hc(g) $,
			\item $ \Hc(f^n) = \abs{n} \cdot \Hc(f) $,
			\item $ \Hc(f) = 0 \iff f \in \CC^* $,
			\item $ \Hc(A(f)) = \deg A \cdot \Hc(f) $ for any $ A \in \CC[T] \setminus \set{0} $.
		\end{enumerate}
	\end{mylemma}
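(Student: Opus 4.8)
The plan is to derive all six properties directly from the definition of $\Hc$ together with the two structural facts about a function field $F$ with algebraically closed constant field $\CC$: for every valuation $\nu$ one has $\nu(fg)=\nu(f)+\nu(g)$, $\nu(f+g)\geq\min(\nu(f),\nu(g))$ and $\nu(c)=0$ for $c\in\CC^{*}$; and an element of $F$ with no pole lies in $\CC$. The only global ingredient is the sum formula $\sum_{\nu}\nu(f)=0$. First I would record the identity of the two displayed expressions for $\Hc(f)$: adding the two sums and using $\max(0,a)+\min(0,a)=a$ at each valuation gives $\sum_{\nu}\nu(f)=0$. This immediately yields $\Hc(f)\geq 0$ and, since $\nu(1/f)=-\nu(f)$, also $\Hc(f)=\Hc(1/f)$, which is (a). Property (d) for $n\geq 0$ is $\nu(f^{n})=n\nu(f)$ plus homogeneity of $\max(0,\cdot)$, and $n<0$ reduces to this via (a). For (e): if $f\in\CC^{*}$ then $\nu(f)=0$ for all $\nu$ so $\Hc(f)=0$; conversely $\Hc(f)=0$ forces $\nu(f)\geq 0$ everywhere, hence $f$ has no pole, hence $f\in\CC$, and $f\neq 0$.

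For the subadditivity bounds (b) and (c), the upper estimates come from checking, valuation by valuation, the pointwise inequalities $-\min(0,\nu(f+g))\leq-\min(0,\nu(f))-\min(0,\nu(g))$ (using $\nu(f+g)\geq\min(\nu(f),\nu(g))$) and $-\min(0,\nu(f)+\nu(g))\leq-\min(0,\nu(f))-\min(0,\nu(g))$, and then summing over all $\nu$. The lower bounds follow formally from the upper ones: write $f=(f+g)+(-g)$ for (b) and $f=(fg)\cdot(1/g)$ for (c), apply the corresponding upper bound, and use $\Hc(-g)=\Hc(g)$ (since $\nu(-g)=\nu(g)$) respectively $\Hc(1/g)=\Hc(g)$ from (a); rearranging gives the claim (with the evident convention when a sum vanishes).

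Finally (f): factor $A(T)=c\prod_{i=1}^{k}(T-\beta_{i})$ over $\CC$ with $k=\deg A$ and $c\in\CC^{*}$, so that at every place $P$ one has $\nu_{P}(A(f))=\sum_{i=1}^{k}\nu_{P}(f-\beta_{i})$. At a pole $P$ of $f$ one has $\nu_{P}(f-\beta_{i})=\nu_{P}(f)$ for each $i$, because $\nu_{P}(\beta_{i})\geq 0>\nu_{P}(f)$; hence $\nu_{P}(A(f))=k\,\nu_{P}(f)$ there. At every other place $P$ all $\nu_{P}(f-\beta_{i})$ are $\geq 0$, so $A(f)$ has no pole at $P$. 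Summing $-\min(0,\cdot)$ over $\PP_{F}$ thus produces exactly $k\,\Hc(f)=\deg A\cdot\Hc(f)$.

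The statement is classical, so I expect no genuine obstacle; the only point that really uses the hypothesis that $\CC$ is the full, algebraically closed field of constants of $F$ is the splitting of $A$ in (f) and the implication ``no poles $\Rightarrow$ constant'' in (e) — everything else is formal manipulation of valuation inequalities. The one thing to be careful about is the bookkeeping between the two equivalent forms of $\Hc$ and the sign conventions hidden in $\min$ and $\max$, so I would fix one form (say $\Hc(f)=-\sum_{\nu}\min(0,\nu(f))$, i.e.\ sum of pole orders) at the outset and use it throughout.
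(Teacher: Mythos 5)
Your proof is correct. Note that the paper does not prove this lemma at all — it simply quotes it and refers to \cite{FKK} for the proof — and your direct verification (sum formula for the equality of the two forms of $\Hc$, pointwise valuation inequalities for the upper bounds in (b) and (c), the formal tricks $f=(f+g)+(-g)$ and $f=(fg)\cdot(1/g)$ for the lower bounds, ``no poles implies constant'' for (e), and factoring $A$ over the algebraically closed field $\CC$ for (f)) is exactly the standard argument one finds there. The only loose end is the degenerate cases $f+g=0$ in (b) and $A(f)=0$ in (f), where $\Hc(0)=\infty$ makes the stated identities need the convention you already flag; this is an imprecision of the lemma as stated, not a gap in your argument.
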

	
	\section{Proofs}
	
	We are now ready to prove our two theorems. At this position we remark that our proofs are very similar to the proof of Theorem 1 in \cite{FK} where the same procedure is used.
	
	\begin{proof}[Proof of Theorem \ref{p2-thm:binarycase}]
		Assume that there exists an index $ n $, an integer $ m \geq 2 $ and a polynomial $ h(x) $ such that $ G_n(x) = (h(x))^m $.
		Thus we have $ h(x) = \zeta (G_n(x))^{1/m} $ for an $ m $-th root of unity $ \zeta $.
		Using the power sum representation of $ G_n(x) $ as well as the binomial series expansion we get
		\begin{align}
			h(x) &= \zeta (G_n(x))^{1/m} = \zeta (a_1 \alpha_1^n + a_2 \alpha_2^n)^{1/m} \nonumber \\
			&= \zeta a_1^{1/m} \alpha_1^{n/m} \left( 1 + \frac{a_2}{a_1} \left( \frac{\alpha_2}{\alpha_1} \right)^n \right)^{1/m} \nonumber \\
			&= \zeta a_1^{1/m} \alpha_1^{n/m} \sum_{h_2=0}^{\infty} \bino{1/m}{h_2} \left( \frac{a_2}{a_1} \right)^{h_2} \left( \frac{\alpha_2}{\alpha_1} \right)^{nh_2} \nonumber \\
			&\label{p2-eq:sumforh}= \sum_{h_2=0}^{\infty} t_{h_2}(x)
		\end{align}
		with the definition
		\begin{align*}
			t_{h_2}(x) :&= \zeta a_1^{1/m} \alpha_1^{n/m} \bino{1/m}{h_2} \left( \frac{a_2}{a_1} \right)^{h_2} \left( \frac{\alpha_2}{\alpha_1} \right)^{nh_2} \\
			&= b_{h_2} a_1^{1/m} \alpha_1^{n/m} \left( \frac{\alpha_2}{\alpha_1} \right)^{nh_2}.
		\end{align*}
		Since we have required in the theorem that $ \frac{a_2}{a_1} \in \CC $, it holds that
		\begin{equation*}
			b_{h_2} := \zeta \bino{1/m}{h_2} \left( \frac{a_2}{a_1} \right)^{h_2} \in \CC.
		\end{equation*}
		
		Let now $ F = \CC(x, \alpha_1(x)^{1/m}) $ and $ m_0 $ be the smallest positive integer such that $ \alpha_1(x)^{m_0/m} \in \CC(x) $.
		Applying Proposition \ref{p2-prop:kummerext} we get that $ F $ is a Kummer extension of $ \CC(x) $ and that $ T^{m_0} - \alpha_1(x)^{m_0/m} $ is the minimal polynomial of $ \alpha_1(x)^{1/m} $ over $ \CC(x) $.
		Moreover, we get that only places in $ F $ above $ \infty $ and the roots of $ \alpha_1 $ as a polynomial in $ \CC(x) $ can ramify.
		Since our field of constants is $ \CC $ and therefore algebraically closed, we have $ \deg P = 1 $ for all places $ P $. Combined with $ \gfr_{\CC(x)} = 0 $ the genus formula of Proposition \ref{p2-prop:kummerext} yields
		\begin{align*}
			2\gfr_F - 2 &= 2 m_0 (\gfr_{\CC(x)} - 1) + \sum_{P \in \PP_{\CC(x)}} (m_0-r_P) \deg P \\
			&\leq -2 m_0 + \sum_{P \in \PP_{\CC(x)} : m_0 > r_P} m_0 \\
			&\leq -2 m_0 + m_0 (1 + \deg \alpha_1) = m_0 (\deg \alpha_1 - 1).
		\end{align*}
		
		Moreover, let $ F' = F(a_1^{1/m}) $ and $ m_1 $ be the smallest positive integer such that $ a_1^{m_1/m} \in F $. Again the application of Proposition \ref{p2-prop:kummerext} yields that $ F' $ is a Kummer extension of $ F $.
		Furthermore, we get that only places in $ F' $ above $ \infty $ and the zeros and poles of $ a_1 $ as an element of $ \CC(x) $ can ramify.
		Since our field of constants is $ \CC $ and therefore algebraically closed, we have $ \deg P = 1 $ for all places $ P $. Combined with the bound on $ \gfr_F $ the genus formula of Proposition \ref{p2-prop:kummerext} yields
		\begin{align*}
			2\gfr_{F'} - 2 &= m_1 (2\gfr_F - 2) + \sum_{P \in \PP_F} (m_1-r_P) \deg P \\
			&\leq m_1 m_0 (\deg \alpha_1 - 1) + \sum_{P \in \PP_F : m_1 > r_P} m_1 \\
			&\leq m_1 m_0 (\deg \alpha_1 - 1) + m_1 m_0 (1 + 2\Hc(a_1)) \\
			&= m_1 m_0 (\deg \alpha_1 + 2\Hc(a_1)).
		\end{align*}
		
		The next step is to estimate the valuation of the $ t_{h_2} $ corresponding to the infinite place of $ \CC(x) $. We get the following lower bound:
		\begin{align*}
			\nu_{\infty}(t_{h_2}(x)) &= \nu_{\infty}\left( a_1^{1/m} \alpha_1^{n/m} \left( \frac{\alpha_2}{\alpha_1} \right)^{nh_2} \right) \\
			&= \frac{1}{m} \nu_{\infty}(a_1) + n \left( \frac{1}{m} \nu_{\infty}(\alpha_1) + h_2 (\nu_{\infty}(\alpha_2) - \nu_{\infty}(\alpha_1)) \right) \\
			&= \frac{1}{m} \nu_{\infty}(a_1) + n \left( \frac{-\deg \alpha_1}{m} + h_2 (\deg \alpha_1 - \deg \alpha_2) \right) \\
			&\geq \frac{1}{m} \nu_{\infty}(a_1) + n \left( h_2 - \frac{\deg \alpha_1}{m} \right).
		\end{align*}
		Let $ J \in \NN $ be arbitrary. Therefore for $ h_2 \geq J + \frac{\deg \alpha_1}{m} $ we have $ \nu_{\infty}(t_{h_2}(x)) \geq \frac{1}{m} \nu_{\infty}(a_1) + nJ $.
		This allows us to split the above sum representation \eqref{p2-eq:sumforh} for $ h(x) $ in the following way:
		\begin{equation*}
			h(x) = t_0(x) + t_1(x) + \cdots + t_{L-1}(x) + \sum_{\nu_{\infty}(t_{h_2}(x)) \geq \frac{1}{m} \nu_{\infty}(a_1) + nJ} t_{h_2}(x)
		\end{equation*}
		with $ L-1 < J + \frac{\deg \alpha_1}{m} $.
		
		We now distinguish between two cases which will be handled in completely different ways. First we assume that $ \set{h(x), t_0(x), \ldots, t_{L-1}(x)} $ is linearly independent over $ \CC $. Later we will consider the case that $ \set{h(x), t_0(x), \ldots, t_{L-1}(x)} $ is linearly dependent over $ \CC $.
		
		So let us now assume that $ \set{h(x), t_0(x), \ldots, t_{L-1}(x)} $ is linearly independent over $ \CC $. We aim to apply Proposition \ref{p2-prop:functionfieldsubspace}.
		To do so let us fix a finite set $ S $ of places of $ F' $ which contains all zeros and poles of $ t_0(x), \ldots, t_{L-1}(x) $ as well as all poles of $ h(x) $. Therefore $ S $ can be chosen in a way such that it contains at most the places above $ \infty $, the zeros of $ \alpha_1 $ and $ \alpha_2 $ and the zeros and poles of $ a_1 $. This gives an upper bound on the number of elements in $ S $:
		\begin{equation*}
			\abs{S} \leq m_1 m_0 (1 + \deg \alpha_1 + \deg \alpha_2 + 2\Hc(a_1)).
		\end{equation*}
		
		Further we write $ \varphi_0 = -t_0(x), \ldots, \varphi_{L-1} = -t_{L-1}(x) $ and $ \varphi_L = h(x) $. We also define $ \sigma = \sum_{i=0}^{L} \varphi_i = \sum_{\nu_{\infty}(t_{h_2}(x)) \geq \frac{1}{m} \nu_{\infty}(a_1) + nJ} t_{h_2}(x) $.
		Since $ \deg (h(x)) = [F' : \CC(h(x))] = [F' : F] \cdot [F : \CC(h(x))] = m_1 \Hc(h(x)) = m_1 \deg h \cdot \Hc(x) = m_1 \deg h \cdot [F : \CC(x)] = m_1 m_0 \deg h $ Proposition \ref{p2-prop:functionfieldsubspace} implies
		\begin{align*}
			\sum_{\nu \in S} &\left( \nu(\sigma) - \min_{i=0, \ldots, L} \nu(\varphi_i) \right) \leq \\
			&\leq \bino{L+1}{2} (\abs{S} + 2\gfr_{F'} - 2) + \deg (h(x)) \\
			&\leq \frac{1}{2} L (L+1) m_1 m_0 (1 + 2 \deg \alpha_1 + \deg \alpha_2 + 4 \Hc(a_1)) + m_1 m_0 \deg h \\
			&\leq L (L+1) m_1 m_0 (1 + \deg \alpha_1 + \deg \alpha_2 + 2\Hc(a_1)) + m_1 m_0 \deg h.
		\end{align*}
		On the other hand we have $ \nu(\sigma) \geq \min_{i=0, \ldots, L} \nu(\varphi_i) $ for every valuation $ \nu $ and thus the lower bound
		\begin{align*}
			\sum_{\nu \in S} \left( \nu(\sigma) - \min_{i=0, \ldots, L} \nu(\varphi_i) \right)
			&\geq \sum_{P \mid \infty} \left( \nu_P(\sigma) - \min_{i=0, \ldots, L} \nu_P(\varphi_i) \right) \\
			&\geq \sum_{P \mid \infty} \left( \nu_P(\sigma) - \nu_P(h(x)) \right) \\
			&= \sum_{P \mid \infty} \nu_P(\sigma) - \sum_{P \mid \infty} e(P \mid \infty) \cdot \nu_{\infty}(h(x)) \\
			&= \sum_{P \mid \infty} \nu_P(\sigma) - m_1 m_0 \nu_{\infty}(h(x)) \\
			&= \sum_{P \mid \infty} e(P \mid \infty) \cdot \nu_{\infty}(\sigma) + m_1 m_0 \deg h \\
			&= m_1 m_0 \nu_{\infty}(\sigma) + m_1 m_0 \deg h \\
			&\geq \frac{m_1 m_0}{m} \nu_{\infty}(a_1) + m_1 m_0 nJ + m_1 m_0 \deg h.
		\end{align*}
		Let us now compare the upper and lower bounds. Since $ m_1 m_0 \deg h $ appears on both sides, we can subtract it and get
		\begin{equation*}
			\frac{m_1 m_0}{m} \nu_{\infty}(a_1) + m_1 m_0 nJ \leq L (L+1) m_1 m_0 (1 + \deg \alpha_1 + \deg \alpha_2 + 2\Hc(a_1)).
		\end{equation*}
		Dividing by $ m_1 m_0 $ and isolating the term containing $ n $ yields
		\begin{equation*}
			nJ \leq  L (L+1) (1 + \deg \alpha_1 + \deg \alpha_2 + 2\Hc(a_1)) + \abs{\nu_{\infty}(a_1)}.
		\end{equation*}
		Since $ J \in \NN $ was arbitrary we can now choose $ J = 1 $. Remember that $ L-1 < J + \frac{\deg \alpha_1}{m} $ and therefore $ L \leq 1 + J + \deg \alpha_1 = 2 + \deg \alpha_1 $. Hence
		\begin{equation}
			\label{p2-eq:boundforn}
			n \leq (2 + \deg \alpha_1) (3 + \deg \alpha_1) (1 + \deg \alpha_1 + \deg \alpha_2 + 2\Hc(a_1)) + \abs{\nu_{\infty}(a_1)}.
		\end{equation}
		
		After this we consider now the case that $ \set{h(x), t_0(x), \ldots, t_{L-1}(x)} $ is linearly dependent over $ \CC $.
		We can assume that $ \set{t_0(x), \ldots, t_{L-1}(x)} $ is linearly independent, since otherwise we are able to group them together and the first case is still working if the $ t_{h_2}(x) $ have constant coefficients.
		This implies that in a relation of linear dependence $ h(x) $ has a nonzero coefficient. Thus there exist complex numbers $ c_i \in \CC $ such that
		\begin{equation}
			\label{p2-eq:binarylasum}
			h(x) = \sum_{i=0}^{L-1} c_i t_i(x).
		\end{equation}
		
		What we are doing subsequently is a reverse induction. We will show $ c_{L-1} = 0 $, then $ c_{L-2} = 0 $ and so on until only $ c_0 $ remains.
		During the following calculations we will use the abbreviations $ \beta_1 := \alpha_1^n $ and $ \beta_2 := \alpha_2^n $.
		Furthermore let $ d_i = b_i c_i $.
		We start with equation \eqref{p2-eq:binarylasum} and get
		\begin{align*}
			h(x) &= c_0 t_0(x) + \cdots + c_{L-1} t_{L-1}(x) \\
			&= d_0 a_1^{1/m} \alpha_1^{n/m} + d_1 a_1^{1/m} \alpha_1^{n/m} \left( \frac{\alpha_2}{\alpha_1} \right)^{n} + \cdots + d_{L-1} a_1^{1/m} \alpha_1^{n/m} \left( \frac{\alpha_2}{\alpha_1} \right)^{n(L-1)} \\
			&= d_0 a_1^{1/m} \beta_1^{1/m} + d_1 a_1^{1/m} \beta_1^{1/m} \frac{\beta_2}{\beta_1} + \cdots + d_{L-1} a_1^{1/m} \beta_1^{1/m} \left( \frac{\beta_2}{\beta_1} \right)^{L-1} \\
			&= a_1^{1/m} \beta_1^{1/m} \left( d_0 + d_1 \frac{\beta_2}{\beta_1} + \cdots + d_{L-1} \left( \frac{\beta_2}{\beta_1} \right)^{L-1} \right)
		\end{align*}
		as well as
		\begin{equation*}
			a_1 \beta_1 + a_2 \beta_2 = (h(x))^m = a_1 \beta_1 \left( d_0 + d_1 \frac{\beta_2}{\beta_1} + \cdots + d_{L-1} \left( \frac{\beta_2}{\beta_1} \right)^{L-1} \right)^m.
		\end{equation*}
		Multiplying with $ \beta_1^{m(L-1)} $ and dividing by $ a_1 $ yields
		\begin{align*}
			\beta_1^{1+m(L-1)} + \frac{a_2}{a_1} \beta_1^{m(L-1)} \beta_2 &= \beta_1 \left( d_0 \beta_1^{L-1} + d_1 \beta_1^{L-2} \beta_2 + \cdots + d_{L-1} \beta_2^{L-1} \right)^m \\
			&= d_0^m \beta_1^{1+m(L-1)} + m d_0^{m-1} d_1 \beta_1^{m(L-1)} \beta_2 \\
			&\hspace{0.5cm}+ \left( \bino{m}{2} d_0^{m-2} d_1^2 + m d_0^{m-1} d_2 \right) \beta_1^{m(L-1)-1} \beta_2^2 \\
			&\hspace{0.5cm}+ \cdots + d_{L-1}^m \beta_1 \beta_2^{m(L-1)}.
		\end{align*}
		Now we take a closer look at the coefficients of the monomials $ \beta_1^i \beta_2^j $ in the above equation.
		We can rewrite the last equation as
		\begin{align}
			(1-d_0^m) \beta_1^{1+m(L-1)} &= \left( m d_0^{m-1} d_1 - \frac{a_2}{a_1} \right) \beta_1^{m(L-1)} \beta_2 \nonumber \\
			&\label{p2-eq:comparedegstep1}\hspace{0.5cm}+ \left( \bino{m}{2} d_0^{m-2} d_1^2 + m d_0^{m-1} d_2 \right) \beta_1^{m(L-1)-1} \beta_2^2 \\
			&\hspace{0.5cm}+ \cdots + d_{L-1}^m \beta_1 \beta_2^{m(L-1)}. \nonumber
		\end{align}
		The left hand side of this equation is either zero or a polynomial of degree equal to $ (1+m(L-1)) \deg \beta_1 $, whereas the polynomial on the right hand side can have at most degree $ m(L-1) \deg \beta_1 + \deg \beta_2 $. Since they are equal both sides must be zero.
		We get $ 1-d_0^m = 0 $ and rewrite the expression on the right side of \eqref{p2-eq:comparedegstep1} as
		\begin{align*}
			\left( \frac{a_2}{a_1} - m d_0^{m-1} d_1 \right) \beta_1^{m(L-1)} \beta_2 &= \left( \bino{m}{2} d_0^{m-2} d_1^2 + m d_0^{m-1} d_2 \right) \beta_1^{m(L-1)-1} \beta_2^2 \\
			&\hspace{0.5cm}+ \cdots + d_{L-1}^m \beta_1 \beta_2^{m(L-1)}.
		\end{align*}
		We apply the same argument as before to get $ \frac{a_2}{a_1} - m d_0^{m-1} d_1 = 0 $. Now we repeat this procedure and end up with
		\begin{align*}
			1-d_0^m &= 0 \\
			\frac{a_2}{a_1} - m d_0^{m-1} d_1 &= 0 \\
			\bino{m}{2} d_0^{m-2} d_1^2 + m d_0^{m-1} d_2 &= 0 \\
			&\vdots \\
			d_{L-1}^m &= 0.
		\end{align*}
		It follows immediately that $ d_{L-1} = 0 $. Hence $ c_{L-1} = 0 $.
		Thus equation \eqref{p2-eq:binarylasum} reduces to
		\begin{equation*}
			h(x) = \sum_{i=0}^{L-2} c_i t_i(x).
		\end{equation*}
		Doing the same calculations again with this new sum or putting $ d_{L-1} = 0 $ and inspect the above calculations in more detail gives now step by step $ c_{L-2} = 0, \ldots, c_1 = 0 $.
		So it holds that $ h(x) = c_0 t_0(x) $.
		Taking the $ m $-th power we get
		\begin{equation*}
			a_1 \alpha_1^n + a_2 \alpha_2^n = d_0^m a_1 \alpha_1^n
		\end{equation*}
		which is equivalent to
		\begin{equation*}
			\frac{a_2}{a_1} \alpha_2^n = (d_0^m-1) \alpha_1^n.
		\end{equation*}
		The left hand side is a polynomial of degree $ n \deg \alpha_2 $, but the right hand side is either zero or of degree $ n \deg \alpha_1 $. This is a contradiction.
		Therefore the linear dependent case cannot occur.
		
		Altogether we must be in the linear independent case and thus have the bound \eqref{p2-eq:boundforn} for the index $ n $. This proves the theorem.
	\end{proof}
	
	The proof of the other theorem is very similar but has some subtle differences. Hence for the readers convenience we write it down in detail.
	
	\begin{proof}[Proof of Theorem \ref{p2-thm:generalcase}]
		Assume that there exists an index $ n $, an integer $ m > \deg \alpha_1 $ and a polynomial $ h(x) $ such that $ G_n(x) = (h(x))^m $.
		Thus again we have $ h(x) = \zeta (G_n(x))^{1/m} $ for an $ m $-th root of unity $ \zeta $.
		Using the power sum representation of $ G_n(x) $ as well as the multinomial series expansion we get
		\begin{align}
			h(x) &= \zeta (G_n(x))^{1/m} = \zeta (a_1 \alpha_1^n + \cdots + a_d \alpha_d^n)^{1/m} \nonumber \\
			&= \zeta a_1^{1/m} \alpha_1^{n/m} \left( 1 + \frac{a_2}{a_1} \left( \frac{\alpha_2}{\alpha_1} \right)^n + \cdots + \frac{a_d}{a_1} \left( \frac{\alpha_d}{\alpha_1} \right)^n \right)^{1/m} \nonumber \\
			&= \zeta a_1^{1/m} \alpha_1^{n/m} \sum_{h_2,\ldots,h_d=0}^{\infty} g_{h_2,\ldots,h_d} \left( \frac{a_2}{a_1} \right)^{h_2} \left( \frac{\alpha_2}{\alpha_1} \right)^{nh_2} \cdots \left( \frac{a_d}{a_1} \right)^{h_d} \left( \frac{\alpha_d}{\alpha_1} \right)^{nh_d} \nonumber \\
			&\label{p2-eq:gsumforh}= \sum_{h_2,\ldots,h_d=0}^{\infty} t_{h_2,\ldots,h_d}(x)
		\end{align}
		with the definition
		\begin{align*}
			t_{h_2,\ldots,h_d}(x) :&= \zeta a_1^{1/m} \alpha_1^{n/m} g_{h_2,\ldots,h_d} \left( \frac{a_2}{a_1} \right)^{h_2} \left( \frac{\alpha_2}{\alpha_1} \right)^{nh_2} \cdots \left( \frac{a_d}{a_1} \right)^{h_d} \left( \frac{\alpha_d}{\alpha_1} \right)^{nh_d} \\
			&= b_{h_2,\ldots,h_d} \alpha_1^{n/m} \left( \frac{\alpha_2}{\alpha_1} \right)^{nh_2} \cdots \left( \frac{\alpha_d}{\alpha_1} \right)^{nh_d}.
		\end{align*}
		Since we have required in the theorem that $ a_1, \ldots, a_d \in \CC $, it holds that
		\begin{equation*}
			b_{h_2,\ldots,h_d} := \zeta a_1^{1/m} g_{h_2,\ldots,h_d} \left( \frac{a_2}{a_1} \right)^{h_2} \cdots \left( \frac{a_d}{a_1} \right)^{h_d} \in \CC.
		\end{equation*}
		
		Let now $ F = \CC(x, \alpha_1(x)^{1/m}) $ and $ m_0 $ be the smallest positive integer such that $ \alpha_1(x)^{m_0/m} \in \CC(x) $.
		Applying Proposition \ref{p2-prop:kummerext} we get that $ F $ is a Kummer extension of $ \CC(x) $ and that $ T^{m_0} - \alpha_1(x)^{m_0/m} $ is the minimal polynomial of $ \alpha_1(x)^{1/m} $ over $ \CC(x) $.
		Moreover we get that only places in $ F $ above $ \infty $ and the roots of $ \alpha_1 $ as a polynomial in $ \CC(x) $ can ramify.
		Since our field of constants is $ \CC $ and therefore algebraically closed, we have $ \deg P = 1 $ for all places $ P $. Combined with $ \gfr_{\CC(x)} = 0 $ the genus formula of Proposition \ref{p2-prop:kummerext} yields
		\begin{align*}
			2\gfr_F - 2 &= 2 m_0 (\gfr_{\CC(x)} - 1) + \sum_{P \in \PP_{\CC(x)}} (m_0-r_P) \deg P \\
			&\leq -2 m_0 + \sum_{P \in \PP_{\CC(x)} : m_0 > r_P} m_0 \\
			&\leq -2 m_0 + m_0 (1 + \deg \alpha_1) = m_0 (\deg \alpha_1 - 1).
		\end{align*}
		
		The next step is to estimate the valuation of the $ t_{h_2,\ldots,h_d} $ corresponding to the infinite place of $ \CC(x) $. We get the following lower bound:
		\begin{align*}
			\nu_{\infty}(t_{h_2,\ldots,h_d}(x)) &= \nu_{\infty}\left( \alpha_1^{n/m} \left( \frac{\alpha_2}{\alpha_1} \right)^{nh_2} \cdots \left( \frac{\alpha_d}{\alpha_1} \right)^{nh_d} \right) \\
			&= n \left( \frac{1}{m} \nu_{\infty}(\alpha_1) + \sum_{j=2}^{d} h_j (\nu_{\infty}(\alpha_j) - \nu_{\infty}(\alpha_1)) \right) \\
			&= n \left( \frac{-\deg \alpha_1}{m} + \sum_{j=2}^{d} h_j (\deg \alpha_1 - \deg \alpha_j) \right) \\
			&\geq n \left( \sum_{j=2}^{d} h_j - \frac{\deg \alpha_1}{m} \right) \geq n \left( \sum_{j=2}^{d} h_j - 1 \right).
		\end{align*}
		Let $ J \in \NN $ be arbitrary. Therefore for $ \sum_{j=2}^{d} h_j \geq J + 1 $ we have the lower bound $ \nu_{\infty}(t_{h_2,\ldots,h_d}(x)) \geq nJ $.
		This allows us to split the above sum representation \eqref{p2-eq:gsumforh} for $ h(x) $ in the following way:
		\begin{equation*}
			h(x) = t_1(x) + t_2(x) + \cdots + t_L(x) + \sum_{\nu_{\infty}(t_{h_2,\ldots,h_d}(x)) \geq nJ} t_{h_2,\ldots,h_d}(x)
		\end{equation*}
		with $ L \leq (J+1)^{d-1} $.
		
		We remark at this point that if $ J = 1 $ we can get a better bound. Let us therefore use the notation $ e_1 := (0,\ldots,0), e_2 := (1,0,\ldots,0), e_3 := (0,1,0,\ldots,0), \ldots, e_d := (0,\ldots,0,1) \in \NN_0^{d-1} $.
		Among the $ t_1(x), \ldots, t_L(x) $ occur in this case at most some elements of $ \set{t_{e_1}(x), \ldots, t_{e_d}(x)} $ and it holds $ L \leq d $.
		Therefore from now on we fix $ J = 1 $.
		
		We now distinguish between two cases which will be handled in completely different ways. First we assume that $ \set{h(x), t_1(x), \ldots, t_L(x)} $ is linearly independent over $ \CC $. Later we will consider the case that $ \set{h(x), t_1(x), \ldots, t_L(x)} $ is linearly dependent over $ \CC $.
		
		So let us now assume that $ \set{h(x), t_1(x), \ldots, t_L(x)} $ is linearly independent over $ \CC $. We aim to apply Proposition \ref{p2-prop:functionfieldsubspace}.
		To do so let us fix a finite set $ S $ of places of $ F $ which contains all zeros and poles of $ t_1(x), \ldots, t_L(x) $ as well as all poles of $ h(x) $. Therefore $ S $ can be chosen in a way such that it contains at most the places above $ \infty $ and the zeros of $ \alpha_1, \ldots, \alpha_d $. This gives an upper bound on the number of elements in $ S $:
		\begin{equation*}
			\abs{S} \leq m_0 \left( 1 + \sum_{j=1}^{d} \deg \alpha_j \right).
		\end{equation*}
		
		Further we write $ \varphi_1 = -t_1(x), \ldots, \varphi_L = -t_L(x) $ and $ \varphi_{L+1} = h(x) $. We also define $ \sigma = \sum_{i=1}^{L+1} \varphi_i = \sum_{\nu_{\infty}(t_{h_2,\ldots,h_d}(x)) \geq nJ} t_{h_2,\ldots,h_d}(x) $.
		Since $ \deg (h(x)) = [F : \CC(h(x))] = \Hc(h(x)) = \deg h \cdot \Hc(x) = \deg h \cdot [F : \CC(x)] = m_0 \deg h $ Proposition \ref{p2-prop:functionfieldsubspace} implies
		\begin{align*}
			\sum_{\nu \in S} &\left( \nu(\sigma) - \min_{i=1, \ldots, L+1} \nu(\varphi_i) \right) \leq \\
			&\leq \bino{L+1}{2} (\abs{S} + 2\gfr_F - 2) + \deg (h(x)) \\
			&\leq \frac{1}{2} L (L+1) m_0 \left( \deg \alpha_1 + \sum_{j=1}^{d} \deg \alpha_j \right) + m_0 \deg h \\
			&\leq L (L+1) m_0 \sum_{j=1}^{d} \deg \alpha_j + m_0 \deg h.
		\end{align*}
		On the other hand we have $ \nu(\sigma) \geq \min_{i=1, \ldots, L+1} \nu(\varphi_i) $ for every valuation $ \nu $ and thus the lower bound
		\begin{align*}
			\sum_{\nu \in S} \left( \nu(\sigma) - \min_{i=1, \ldots, L+1} \nu(\varphi_i) \right)
			&\geq \sum_{P \mid \infty} \left( \nu_P(\sigma) - \min_{i=1, \ldots, L+1} \nu_P(\varphi_i) \right) \\
			&\geq \sum_{P \mid \infty} \left( \nu_P(\sigma) - \nu_P(h(x)) \right) \\
			&= \sum_{P \mid \infty} \nu_P(\sigma) - \sum_{P \mid \infty} e(P \mid \infty) \cdot \nu_{\infty}(h(x)) \\
			&= \sum_{P \mid \infty} \nu_P(\sigma) - m_0 \nu_{\infty}(h(x)) \\
			&= \sum_{P \mid \infty} e(P \mid \infty) \cdot \nu_{\infty}(\sigma) + m_0 \deg h \\
			&= m_0 \nu_{\infty}(\sigma) + m_0 \deg h \\
			&\geq m_0nJ + m_0 \deg h.
		\end{align*}
		Let us now compare the upper and lower bounds. Since $ m_0 \deg h $ appears on both sides, we can subtract it and get
		\begin{equation*}
			m_0nJ \leq L (L+1) m_0 \sum_{j=1}^{d} \deg \alpha_j.
		\end{equation*}
		Dividing by $ m_0 $ and remembering $ J = 1 $ as well as $ L \leq d $ yields
		\begin{equation}
			\label{p2-eq:gboundforn}
			n \leq  d (d+1) \sum_{j=1}^{d} \deg \alpha_j.
		\end{equation}
		
		After this we consider now the case that $ \set{h(x), t_1(x), \ldots, t_L(x)} $ is linearly dependent over $ \CC $.
		We can assume that $ \set{t_1(x), \ldots, t_L(x)} $ is linearly independent, since otherwise we are able to group them together and the first case is still working if the $ t_{h_2,\ldots,h_d}(x) $ have constant coefficients.
		This implies that in a relation of linear dependence $ h(x) $ has a nonzero coefficient. Thus there exist complex numbers $ c_i \in \CC $ such that
		\begin{equation}
			\label{p2-eq:generallasum}
			h(x) = \sum_{i=1}^{d} c_i t_{e_i}(x).
		\end{equation}
		Here we have used the above mentioned restriction on the possible elements of $ \set{t_1(x), \ldots, t_L(x)} $.
		During the following calculation we will use the abbreviation $ d_i = b_{e_i} c_i $.
		We start with equation \eqref{p2-eq:generallasum} and get
		\begin{align}
			\label{p2-eq:generallah}
			h(x) &= c_1 t_{e_1}(x) + \cdots + c_d t_{e_d}(x) \nonumber \\
			&= d_1 \alpha_1^{n/m} + d_2 \alpha_1^{n/m} \left( \frac{\alpha_2}{\alpha_1} \right)^n + \cdots + d_d \alpha_1^{n/m} \left( \frac{\alpha_d}{\alpha_1} \right)^n.
		\end{align}
		We see that $ h(x) $ is of the form $ \alpha_1^{n/m} \cdot R $ with an $ R \in \CC(x) $.
		Thus $ \alpha_1^{n/m} $ must be an element of $ \CC(x) $.
		By the definition of $ m_0 $ it follows that $ m_0 \mid n $ and therefore there exists an integer $ \ell $ such that $ n = m_0 \ell $.
		Moreover $ \alpha_1^{n/m} \in \CC(x) $ implies $ \alpha_1^{n/m} \in \CC[x] $ since $ \alpha_1 $ is a polynomial.
		Let us now rewrite equation \eqref{p2-eq:generallah} as follows
		\begin{equation}
			\label{p2-eq:mustbepoly}
			h(x) - d_1 \alpha_1^{n/m} = \frac{d_2 \alpha_1^{n/m} \alpha_2^n + \cdots + d_d \alpha_1^{n/m} \alpha_d^n}{\alpha_1^n}.
		\end{equation}
		The left hand side of equation \eqref{p2-eq:mustbepoly} is a polynomial. So the right hand side must be, too. Since the denominator has degree $ n \deg \alpha_1 $ and the numerator degree at most $ \frac{n}{m} \deg \alpha_1 + n \deg \alpha_2 < n + n \deg \alpha_2 \leq n \deg \alpha_1 $, the only possibility is that both sides are zero.
		Hence
		\begin{equation*}
			a_1 \alpha_1^n + \cdots + a_d \alpha_d^n = (h(x))^m = d_1^m \alpha_1^n
		\end{equation*}
		which is equivalent to
		\begin{equation*}
			a_2 \alpha_2^n + \cdots + a_d \alpha_d^n = (d_1^m-a_1) \alpha_1^n.
		\end{equation*}
		The left hand side is a polynomial of degree $ n \deg \alpha_2 $, but the right hand side is either zero or of degree $ n \deg \alpha_1 $. This is a contradiction.
		Therefore the linear dependent case cannot occur.
		
		Altogether we must be in the linear independent case and thus have the bound \eqref{p2-eq:gboundforn} for the index $ n $.
		Hence we only need to choose $ m > \deg \alpha_1 $ large enough such that for all $ n \leq  d (d+1) \sum_{j=1}^{d} \deg \alpha_j $ the polynomial $ G_n(x) $ is not an $ m $-th power.
	\end{proof}

\end{document}